\newtheorem{thm}{Theorem}
\newtheorem{lem}[thm]{Lemma}
\newtheorem{str}[thm]{Construction}
\mathchardef\hy="2D
\begin{document}

\mainmatter

\title{On building 4-critical plane and projective plane multiwheels from odd wheels}
\titlerunning{On building 4-critical plane and projective plane multiwheels}
\author{Dainis Zeps}
\authorrunning{Dainis Zeps}
\institute{Institute of Mathematics and Computer Science, \\ University of Latvia, \\
Rainis blvd., Riga, Latvia\\
\url{dainize@mii.lu.lv}\\
\url{http://www.ltn.lv/~dainize/}}

\toctitle{On building 4-critical graphs}
\tocauthor{Dainis Zeps}

\maketitle

\begin{abstract}
We build unbounded classes of plane and projective plane multiwheels that are 4-critical that are received summing odd wheels as edge sums modulo two. These classes can be considered as ascending from single common graph that can be received as edge sum modulo two of the octahedron graph $O$ and the minimal wheel $W_3$. All graphs of these classes belong to $2n-2$-edges-class of graphs, among which are those that quadrangulate projective plane, i.e., graphs from Gr\"{o}tzsch class, received applying Mycielski's Construction to odd cycle \cite{bo 08}.
\keywords{graph coloring, chromatic critical graphs, wheels, planar graphs, projective planar graphs, Gr\"{o}tzsch graph, Mycielski's construction}
\end{abstract}

\section{Introduction}
We are using terminology from \cite{kra 94,moh 01,ze 98}.

\placedrawing[h]{OCT1.LP}{In the graph $w_{1_3}$ (left) contracting thick edge we get octahedron graph without an edge ($O^-$, right). Adding dotted edge we get octahedron graph $O$.}{fig0}

We consider graph in fig.\ref{fig0} left, that in \cite{ze 11} is denoted $G_3$, but in this article gets several denotations due to its particular features, $w_{111}$ or $w_{1_3}$, $g_{111}$ or $g_{1_3}$, see lower. In this introduction we consider some simple features of this  graph $w_{1_3}$ called {\em base graph} in frames of this article, but further in the article we generalize these features to two, plane and projective plane, unbounded classes of graphs.

We start from a simple observation that contraction of one of edges (incident to degree 3 vertices) in the graph ($w_{1_3}$) turns it into octahedron graph ($O$) minus an edge ($O^-$), see fig.\ref{fig0}. Now it is obvious that the graph $O^-$ is a minor of $w_{1_3}$, but the graph $O$ is not.

Further, we may express this fact  using terminology from \cite{ze 11}, i.e., $<O^-,O>$ is minor bracket for the graph $w_{1_3}$. We remind that $<h_1,h_2>$ is minor bracket for $G$ if $h_1\prec h_2$, $h_1 \prec G$ and $h_2 \not \prec G$, \cite{ze 11}. The fact expressed as predicate $<h_1,h_2:G>$.

One more observation that the base graph $w_{1_3}$ turns into the wheel graph $W_3$ after contracting three edges incident to three and four degree vertices.

Further, graph $w_{1_3}$ is 4-critical \cite{ze 11}, and this fact might be verified directly for such a small graph. But for further discussion we need to examine this graph more closely.

The base graph $w_{1_3}$ can be considered as edge sum modulo two of three graphs $W_3$ in the way that each pair overlap just in one edge, and all three wheels have one vertex in common, see fig. \ref{fig6}. Let us assume that overlap either two rim edges, or two spike edges, or spike edge and rim edge, all three ways giving the same graph $w_{1_3}$ (see fig. \ref{fig6}): trivial (automorphism) fact, but not so for further, see below. Further we are going to use this consideration of $w_{1_3}$ as sum of three wheels, and for that reason we use this multi-index denotation for it, i.e., $w_{111}$ (or $w_{1_3}$) with three indices (three ones), not single index, and second index as factor of equal indices, see lower.  Further we are going to use by edge summation modulo two denotations with  simple arithmetic operators, i.e., multiplication and sum operators, e.g. $w_{1_3} = 3 W_3$. Of course, corresponding graph operations behind are indeterministic, i.e., depending on how we configure graphs one against other by edge summation modulo two, i.e., which elements of corresponding graphs we allow to overlap. Besides, $w_{1_3}$ may be expressed in the way $O+W_3$ with rim edges annihilating with a triangle of octahedron graph, thus having one more equation $w_{1_3} = O + W_3$ becoming equality under specified conditions.

\placedrawing[h]{OCT6.LP}{Forming graph $w_{1_3}$ (b) by summation modulo two of edges of three graphs $W_3$  (a): $3 W_3=w_{1_3}$. Edges that annihilate (one in one pair) by summation modulo two are depicted as dotted (b). In c) $w_{1_3}$ is received by operation $O+W_3$ where triangle (depicted dotted) is annihilated in both graphs.}{fig6}

\placedrawing[h]{OCT7.LP}{Four possible colorings of $w_{1_3}$. $D$ designates lonely color. Lonely color $D$ may color central hub (first), section hub (second), rim (third), and two lonely vertices as two rim vertices (fourth).}{fig7}

The graph $w_{1_3}$ can be colored in four distinct ways, see fig.\ref{fig7}. We distinct them in way lonely color, $D$, is applied: the lonely $D$ may color central hub, section hub, rim and two vertices of rim, thus giving four ways. We remind that only in chromatic critical graph every vertex may receive lonely color. Thereby, each vertex for $w_{1_3}$ may be colored with lonely color, thus proving that graph is 4-critical.

Further, the base graph $w_{1_3}$ may be embedded on projective plane, quadrangulating it, see a) in fig. \ref{fig10} lower. Clearly, $w_{1_3}$ belongs to class of graphs with $2n-2$ edges by $n$ vertices, where all graphs quadrangulating projective plane should belong. Besides, $w_{1_3}$ is selfdual, if considered plane, but of course not such on projective plane. It is remarkable that dual graph to octahedron is cube graph, but cube graph with one corner cut off becomes just the graph $w_{1_3}$, that is selfdual (see \cite{ze 11}).

The aim of this article is to show that these facts concerning this single and very simple graph $w_{1_3}$ may be extended for unbounded classes of 4-critical graphs, both planar and projective planar. These classes we are going to build from arbitrary odd wheel graphs.

The theorem about octahedron minor bracket is mentioned in \cite{ze 08}, but here we give proof of this fact.

\section{Defining 4-critical plane  multiwheels}

Let us assume that the wheel $W_k$ is built from $k$ {\em simple sections} where as simple section we take triangle $C_3$ with one vertex common from each triangle  for the wheel's hub, and opposite edge of triangle as for forming wheel's rim. In other words, wheel $W_k$ is the sum of its $k$ sections (triangles) into a simple graph (with double-edges turning into simple edges). By the way, if we applied this same summation of the edges modulo two then the sides of triangles that touch each other would annihilate, and remnant/resulting graph would be simple cycle (rim of the wheel) and isolated vertex (hub of the wheel). As an obvious observation, let us notice that edge sum modulo two of $k>3$ triangles gives $C_k$ and isolated vertex only in case triangles annihilate two edges, and one vertex becomes common for all triangles.

If we take wheel $W_k$ to be odd ($k$ is odd) then we get 4-critical graph. Odd wheels are simplest unbounded class of vertex-3-connected 4-critical graphs. (Let us remember that odd cycles is the only graphs that are 3-critical.) We are going to generalize this class to 4-critical vertex-3-connected plane and projective plane multiwheels.

Let us do similar as higher summation of edge sets of some wheels. Let us replace each section in a simple odd wheel with another arbitrary odd wheel in a way that edge sets of sections are summed modulo two.  This new aggregation of wheels $M$ may be expressed as $\sum W_{k_i}$ where summation is modulo two over index $i$ numbering sections that were replaced by wheels of order $k_i$ in each case. Evidently the summation itself is indeterministic because result of it depends on how wheels overlap each other by summation. Now we ask: under which conditions the resulting graph is 4-critical. It turns out that the answer directly is connected with the number of edges wheels intersect by summation. Theorem \ref{thm1} below says that this number must be equal to two.

But first we are to prove some lemmas  about intersecting wheels without edge losses by summation modulo two. For example, $W_5+W_5$ may be formed with all five rim vertices of both wheels common and forming subgraph $K_5$, but resulting 6-chromatic graph is in no way chromatic critical.

\begin{lem}\label{ov} Let  $k$ odd wheels by summation possibly intersect in vertices but not in edges. The sum of these wheels can't give 4-critical graph except in case $k=1$.
\end{lem}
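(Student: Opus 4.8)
The plan is to reduce everything to a single observation: if the $k$ wheels pairwise share no edge, then nothing cancels in the edge sum modulo two, so the resulting graph $M=\sum_{i=1}^{k} W_{k_i}$ has edge set equal to the disjoint union of the edge sets of the summands, and each $W_{k_i}$ sits inside $M$ as a (not necessarily induced) subgraph which is still a complete odd wheel. Since an odd wheel has chromatic number $4$ — its rim is an odd cycle, forcing three colours, and the hub is adjacent to the whole rim — we already get $\chi(M)\ge 4$, with possibly strict inequality when the wheels overlap heavily in vertices (this is exactly the $W_5+W_5$ situation mentioned above, where two edge-disjoint rim cycles combine into a $K_5$ and $\chi$ jumps to $5$).

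Next I would suppose $k\ge 2$ and, for contradiction, that $M$ is $4$-critical, so that $\chi(M-e)=3$ for every edge $e$. Choose any edge $e$; by edge-disjointness it belongs to exactly one of the summands, say $W_{k_1}$. Since $k\ge 2$ there is a second wheel $W_{k_2}$, and it is edge-disjoint from $W_{k_1}$, so removing $e$ deletes no edge of $W_{k_2}$; hence $W_{k_2}$ is still present in $M-e$ as a full odd wheel, giving $\chi(M-e)\ge\chi(W_{k_2})=4$, a contradiction. Thus $M$ cannot be $4$-critical once $k\ge 2$. The case $k=1$ is the genuine exception: then $M=W_{k_1}$ is itself an odd wheel, which is $4$-critical, so the restriction in the statement is best possible.

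I do not expect a real obstacle here; the whole content is that, with pairwise edge-disjoint summands, the deletion of a single edge cannot touch more than one of the wheels, so an odd-wheel obstruction to $3$-colourability always survives. The only points that need to be stated with care are that ``intersect in vertices but not in edges'' really does mean the modulo-two sum loses no edges (so that each $W_{k_i}$ genuinely remains a subgraph of $M$), and the elementary fact $\chi(W_{2t+1})=4$; a parallel argument for vertex deletion, applied to a wheel that has a vertex outside all the others, covers the vertex-critical reading of the statement as well.
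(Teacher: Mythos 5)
Your proposal is correct and follows essentially the same route as the paper: since no edges cancel, each odd wheel survives intact as a subgraph, and deleting an edge from one wheel leaves another 4-chromatic odd wheel untouched, so the sum cannot be 4-critical for $k>1$. You merely spell out more explicitly (via $\chi(W_{2t+1})=4$ and the contradiction $\chi(M-e)\ge 4$) what the paper compresses into the remark that an edge or vertex elimination in one wheel cannot affect the 4-coloring requirements of the others.
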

\begin{proof}Let by summation of wheels $k>1$ wheels  have become vertex connected components, that are wheels all the same, in the resulting graph. It is obvious that elimination of edge or vertex, or edge contraction in one wheel can't affect coloring  of others in four colors in the resulting graph. Thus, the graph can't be chromatic critical.
\end{proof}

Let us configure two odd wheels so that they have common two adjacent vertices, and sum their edges modulo two. It is easy to see that the resulting graph is 3-chromatic. It suffices to notice that losing of an edge in both odd wheels allow to color them in 3-colors so that lost edge's ends receive the same color. Further, we may easily apply the use of this fact to {\em unclosed sequence of wheels} $W_{q_1}$,...,$W_{q_i}$ where two proximal wheels overlap in two adjacent vertices, but next two possibly only in one. Let us formulate it as a lemma.

\begin{lem}\label{us}  Let summation of edges modulo two is applied to unclosed sequence of wheels. The resulting graph is 3-chromatic.
\end{lem}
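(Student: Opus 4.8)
The plan is to isolate a single fact about a lone odd wheel and then paste $3$-colorings together along the sequence. The fact I would establish first: for every edge $e$ of an odd wheel $W_k$, the graph $W_k\setminus e$ has a proper $3$-coloring in which the two ends of $e$ receive the same color --- in fact \emph{every} proper $3$-coloring of $W_k\setminus e$ does. That $W_k\setminus e$ is $3$-chromatic at all is just $4$-criticality of the odd wheel, already noted above. For the monochromaticity I would run a parity argument on the rim: the hub is adjacent to all $k$ rim vertices, so in any proper $3$-coloring it occupies one color alone and the rim is colored from the remaining two; if $e$ is a rim edge then the rim minus $e$ is a path on $k$ vertices and, $k$ being odd, its two endpoints --- the ends of $e$ --- are forced to agree; if $e$ is a spoke $hv$ then deleting $v$ leaves the other rim vertices as a path on $k-1$ vertices whose ends get the two non-hub colors, so $v$ is forced to the hub's color and again the ends of $e$ agree.

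Next I would prove the lemma by induction on the number $i$ of wheels, processing $W_{q_1},\dots,W_{q_i}$ from one end of the sequence to the other; the base step is the two-wheel case settled just before the lemma. Suppose a proper $3$-coloring of $W_{q_1}+\dots+W_{q_{j-1}}$ has been produced so that, whenever its link to $W_{q_j}$ is through an edge $f$ (rather than a single vertex), the two ends of $f$ carry the same color; this is automatic, since $f$ is precisely the edge cancelled when $W_{q_{j-1}}$ was adjoined, so by the fact above the ends of $f$ are monochromatic there (earlier colors having only been permuted). Now pick a proper $3$-coloring of $W_{q_j}$ with the one or two edges it loses to the full sum removed, arranged so that each lost edge is monochromatic on its ends, and permute its three colors so that the shared vertex --- respectively the common color on the shared edge --- matches the color already fixed. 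Because the sequence is unclosed, $W_{q_j}$ meets the rest of the graph only in that vertex or that deleted edge, so the two colorings combine to a proper $3$-coloring of $W_{q_1}+\dots+W_{q_j}$. Since an end wheel of the sequence loses only its single shared edge and so keeps a triangle, the graph is not bipartite, and $3$-chromaticity follows.

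The step that genuinely needs care --- and which I expect to be the main obstacle --- is ``with the one or two edges it loses removed'': an interior wheel can shed an edge on each side, and we need a \emph{single} proper $3$-coloring of that wheel minus two edges that is monochromatic on both. When the two lost edges are the two spokes at one rim vertex, the situation that arises from the section-replacement construction (consecutive wheels sharing a spoke through the common center), this is immediate --- color that vertex with the hub's color and its two rim neighbors from the other two colors, the rest of the rim falling into paths that are freely $2$-colorable. In general it reduces to a parity count on the two arcs into which the two lost rim edges cut the rim --- one arc has an odd and the other an even number of vertices since $k$ is odd --- and one must verify the monochromaticity demands on those arcs are jointly satisfiable. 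This is exactly where the word \emph{unclosed} earns its keep: closing the sequence into a cycle pins the arcs in conflicting ways and pushes the chromatic number up to $4$, which is the content of Theorem \ref{thm1}.
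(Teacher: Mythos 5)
Your route is the same as the paper's (the paper offers nothing beyond the two-wheel observation and the claim that one "may easily apply" it along the sequence), and your first fact is correct and is the right tool: in \emph{every} proper 3-coloring of an odd wheel minus an edge the two ends of the missing edge get the same color. The genuine problem sits exactly at the step you flagged and postponed: an interior wheel of the sequence loses two edges, and you need a single proper 3-coloring of that wheel minus both edges that is monochromatic on both missing pairs. You assert this "reduces to a parity count" which unclosedness makes succeed; in fact the parity goes the other way. If the two lost edges are both rim edges, every spoke is still present, so in any 3-coloring the hub keeps a color to itself and the whole rim is colored from the remaining two; traversing the odd rim, each of the $k-2$ surviving rim edges forces a color change while each of the two missing edges is required to be monochromatic, giving an odd number of changes around a closed walk --- impossible. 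The same obstruction appears when the two lost edges are spokes whose rim ends are adjacent, or a spoke and a rim edge sharing a rim vertex; only two spokes with non-adjacent rim ends, or a spoke and a rim edge with no common vertex, are jointly satisfiable.

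Moreover this is not a mere gap in exposition: combined with your own forcing fact applied to the neighbouring wheels it produces unclosed sequences that are not 3-chromatic at all. Take a $W_5$ and cancel two of its rim edges against two otherwise disjoint copies of $W_3$, one along each edge: any 3-coloring of the sum restricts to the outer wheels so as to force both cancelled edges to be monochromatic inside the $W_5$, which the parity argument forbids, so the sum is 4-chromatic; the same happens for three $W_3$'s in a path where consecutive wheels share a spoke at a common vertex. So with "unclosed sequence" meaning only that consecutive wheels share a cancelled edge and the chain does not close, Lemma \ref{us} cannot be proved because it is false as stated; what survives, and what the proof of Theorem \ref{thm1} actually uses, is the weaker claim that such sums are never 4-critical (in the spirit of Lemma \ref{vvsth}), or else the hypothesis must restrict which pair of edges an interior wheel may lose to the two satisfiable configurations above. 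Your proposal as written does not close this step, and your concluding remark that the conflict arises only when the sequence is closed into a cycle is not correct.
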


We need one more crucial feature of 4-critical graphs. Let graph $H$ is 4-critical and let $H'=H \odot w$ be graph $H$ with vertex $w$ split into two new vertices and edges incident to $w$ be connected either to one or other vertex. We ask whether graph $H'$ can remain to be 4-critical. Of course, it is expectable that $H'$ becomes 3-chromatic.

\begin{lem}\label{vsth} Let graph $H$ be 4-critical and $w \in V(H)$. Then graph $H \odot w$ is always 3-chromatic.
\end{lem}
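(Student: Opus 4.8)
The plan is to invoke $4$-criticality of $H$ only through its standard consequence: for every edge $e=uv$ of $H$ the graph $H-e$ is $3$-colourable, and since re-adding $e$ would $3$-colour $H$, \emph{every} proper $3$-colouring of $H-e$ gives $u$ and $v$ the same colour. Write the split of $w$ as $N_H(w)=N_1\sqcup N_2$ with $N_1,N_2\ne\emptyset$, and let $w_1,w_2$ be the two new vertices, $w_i$ adjacent to $N_i$. Because $w_1w_2\notin E(H\odot w)$, a proper $3$-colouring of $H\odot w$ is exactly a proper $3$-colouring $c$ of $H-w$ that is non-rainbow on $N_1$ (so that $w_1$ has a free colour) and non-rainbow on $N_2$ --- the two extensions never conflict. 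Thus the lemma reduces to producing a single $3$-colouring of $H-w$ that omits a colour on each of $N_1$ and $N_2$.

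First, $H-w$ is not bipartite --- otherwise a $2$-colouring of it, plus a third colour on $w$, would $3$-colour $H$ --- so $H-w$ contains an odd cycle $C$; since $w\notin C$, $C$ survives in $H\odot w$ and $\chi(H\odot w)\ge 3$. For the upper bound, fix $x\in N_1$: every $3$-colouring $c$ of $H-wx$ has $c(w)=c(x)$, and as $w$ is still joined in $H-wx$ to all of $N_2$ we get $c(w)\notin c(N_2)$, so $|c(N_2)|\le 2$; restricting $c$ to $V(H)\setminus\{w\}$ gives a $3$-colouring of $H-w$ non-rainbow on $N_2$, and symmetrically one obtains $3$-colourings non-rainbow on $N_1$. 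This already disposes of the cases $|N_1|\le 2$ or $|N_2|\le 2$: if, say, $|N_1|\le 2$, then $N_1$ is non-rainbow under \emph{every} colouring, so the colouring coming from $H-wx$ works.

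For $|N_1|\ge 3$ and $|N_2|\ge 3$ --- which I expect to be the real obstacle --- I would argue by contradiction. Suppose no $3$-colouring of $H-w$ omits a colour on both parts, i.e. $\chi(H\odot w)\ge 4$, and let $K\subseteq H\odot w$ be a $4$-critical subgraph. If $w_1\notin V(K)$ then $K\subseteq(H\odot w)-w_1$, which is isomorphic to the proper spanning subgraph $H-\{wt:t\in N_1\}$ of $H$ and hence $3$-colourable, a contradiction; so $w_1\in V(K)$, and likewise $w_2\in V(K)$. Identifying the non-adjacent vertices $w_1,w_2$ of $K$ produces a subgraph $K'$ of $H$ (identification in $H\odot w$ returns $H$) with $\chi(K')\ge\chi(K)=4$, whence $K'=H$ by $4$-criticality of $H$; comparing vertex numbers forces $V(K)=V(H\odot w)$ and then $K=H\odot w$, so $H\odot w$ is itself $4$-critical. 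In particular it has minimum degree $\ge 3$, which re-proves the easy cases and, more usefully, now lets me combine criticality of $H\odot w$ with that of $H$.

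The final step is the delicate one. From criticality of $H\odot w$: for $x\in N_1$, every $3$-colouring of $(H\odot w)-w_1x$ must put $c(w_1)=c(x)$ and must realise the two remaining colours on $N_1\setminus\{x\}$, and no colour other than $c(x)$ can be free at $x$ --- in each case a contrary situation would let one recolour $w_1$ or $x$ and $3$-colour $H\odot w$. I would then start from a $3$-colouring of $H-w$ that is non-rainbow on $N_1$ (one exists since $N_2\ne\emptyset$, by applying the subgraph argument above to $(H-w)+w_1$); under our standing assumption it is rainbow on $N_2$. Now perform Kempe-chain exchanges on the colour that occurs on $N_2$ but not on $N_1$, choosing at each step an exchange whose chain misses $N_1$ entirely; such an exchange strips that colour off a vertex of $N_2$ without disturbing $N_1$, and the structural constraints just listed (together with $|N_1|,|N_2|\ge 3$) are exactly what should force a safe choice to exist --- eventually contradicting that $N_2$ remains rainbow. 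The bookkeeping that a suitable Kempe chain disjoint from $N_1$ can always be found is where the real difficulty lies; the rest is routine.
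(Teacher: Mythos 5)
Your reductions at the start are sound: the observation that a proper $3$-colouring of $H\odot w$ is the same thing as a $3$-colouring of $H-w$ that is non-rainbow on each of $N_1,N_2$, the disposal of the cases $\min(|N_1|,|N_2|)\le 2$ via criticality of $H$ at an edge $wx$, and the dichotomy that if $H\odot w$ is not $3$-chromatic then it is itself $4$-critical (your subgraph-plus-identification argument is correct, since the identification map is a bijection on edges). Note, for comparison, that the paper itself contains no proof of this lemma at all: it explicitly delegates it to the unpublished reference \cite{ze 12} and even phrases the general $k$-critical version as a question, so everything you did already goes beyond the text.

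However, the proposal does not prove the lemma. After your dichotomy, the case $|N_1|,|N_2|\ge 3$ is exactly the assertion that no $4$-critical graph can be obtained from a $4$-critical graph by splitting a vertex, i.e.\ it is the entire content of the statement, and your treatment of it is a sketch whose decisive step you yourself flag as unresolved. Concretely, nothing in the argument guarantees a Kempe chain, starting at a vertex of $N_2$ coloured with the colour missing from $N_1$, that avoids $N_1$: such a chain may pass through $N_1$, the exchange may turn $N_1$ rainbow, or it may merely permute the colours seen on $N_2$ without ever removing the third colour, and iterating exchanges can cycle. The structural facts you extract from criticality of $H\odot w$ constrain colourings of graphs $(H\odot w)-e$, not the two-coloured subgraphs of one fixed colouring of $H-w$, so they do not control the chains; this is precisely the kind of step where Kempe-chain arguments classically fail. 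As it stands the proof has a genuine gap at its crux, and given that the source the paper relies on for this lemma never appeared, you should treat the remaining claim as open-ended rather than as routine bookkeeping.
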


We are not going to proving this fact here, but delegate to article \cite{ze 12}. We call the feature of 4-critical graph to loose his state of 4-critical graph under vertex split {\em preservation of 4-criticality under vertex split}. Thus, k-criticality is the feature that is preserved under vertex and edge elimination and edge contraction and expectably under vertex split too. It is easy to see that 3-criticality is preserved under vertex split and we ask this same for arbitrary k-critical graphs.

We are going to use lemma \ref{vsth} in the following way. By summation of edge sets of wheels modulo two in order to build new 4-critical graphs we may ignore cases where wheels intersect only in vertices without incident edges, knowing that this can't lead to new 4-critical graphs. Suppose we received 4-critical graph in this way. Then splitting all vertices that were merged by summation backwards we should receive 3-chromatic graph but it might not be true. Let us formulate this fact  as lemma.

\begin{lem}\label{vvsth} Let by summation of edge sets of wheels some wheels intersect in vertices without incident edges. Then resulting graph can't be 4-critical.
\end{lem}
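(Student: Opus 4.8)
The plan is to argue by contradiction, using Lemma~\ref{vsth} exactly as the preceding paragraph advertises. Suppose the sum $G=\sum W_{k_i}$ of odd wheels is 4-critical, yet two of the summands, say $W_{k_i}$ and $W_{k_j}$, share a vertex $v$ but no edge. If such a shared vertex is a cut vertex of $G$ we are done at once, since a 4-critical graph has no cut vertex (removing an edge from the block not carrying the colour still forces all colours); so from now on assume no shared vertex is a cut vertex.

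Among the wheels taking part in a vertex-only overlap, I would first locate one, call it $W$, whose edge set passed through the edge summation untouched, i.e.\ $W$ shares no edge with \emph{any} other summand, so no edge of $W$ annihilated. The hypothesis guarantees that the two members of a vertex-only overlapping pair share no edge \emph{with each other}, and choosing the pair and the member appropriately yields such a $W$ (if it genuinely cannot be found, every candidate wheel has been eroded by the sum, and I would then argue that either $G$ is no longer 4-chromatic, or the edge-sharing clusters of the summation are glued only at vertices so that $G$ splits as a vertex-sum of multiwheels and fails to be 4-critical for the same reason as in Lemma~\ref{ov}). Let $v$ be a vertex at which $W$ meets some other summand without a common edge.

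Now split $v$ into $v'$ and $v''$, assigning to $v'$ precisely the edges of $W$ incident to $v$ and to $v''$ all the remaining edges at $v$; write $G'=G\odot v$. Since $G$ is 4-critical, Lemma~\ref{vsth} forces $G'$ to be 3-chromatic. On the other hand $W$ sits inside $G'$ as a subgraph: all of its rim edges and all of its spokes are present in $G$ (nothing of $W$ was cancelled, by the choice of $W$), and the vertex of $W$ that was identified into $v$ — be it the hub or a rim vertex — keeps all of its $W$-edges on $v'$, so a faithful copy of $W$ survives the split. An odd wheel is 4-chromatic, hence $G'\supseteq W$ cannot be properly 3-coloured, contradicting the previous sentence. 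Therefore $G$ is not 4-critical.

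The genuinely delicate point, and the one I would spend most effort on, is the bookkeeping of the second paragraph: showing that an \emph{un-annihilated} odd wheel participating in a vertex-only overlap can always be pinned down, and disposing cleanly of the degenerate alternatives in which every candidate wheel has been eroded by the edge sum. Once that is settled, the rest is routine: a wheel retaining its full rim and full spoke-star is plainly 4-chromatic, and the statement that splitting a single vertex of a 4-critical graph lowers the chromatic number is precisely Lemma~\ref{vsth}.
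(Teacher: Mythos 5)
Your overall strategy is the one the paper itself intends for this lemma: assume the sum is 4-critical, undo the vertex-only identification by a vertex split, invoke Lemma~\ref{vsth} to conclude the split graph is 3-chromatic, and contradict this by exhibiting something 4-chromatic in the split graph. The genuine gap is exactly at the point you flag yourself: the existence of a summand wheel $W$ involved in a vertex-only overlap that shares no edge with \emph{any} other summand. The hypothesis of the lemma only gives you two wheels meeting at a vertex with no common incident edges; it says nothing about either of them being edge-disjoint from the remaining summands. In the situation the lemma is actually used for (the proof of Theorem~\ref{thm1}), every wheel in the sum is supposed to lose edges --- the relevant configurations are closed sequences in which each wheel shares edges with two neighbouring wheels, with an additional accidental vertex identification between two wheels of the sequence. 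In such a configuration no summand survives the summation intact, so your chosen $W$ does not exist, and the contradiction with Lemma~\ref{vsth} evaporates: after the split, the copy of $W$ inside $G'$ is an odd wheel minus some edges, which may well be 3-colourable.

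Your fallback branch does not close this. ``Either $G$ is no longer 4-chromatic'' is not argued (and need not hold a priori), and the alternative ``the edge-sharing clusters are glued only at vertices, so $G$ is a vertex-sum and fails as in Lemma~\ref{ov}'' covers only the case where the vertex-only overlap occurs between two \emph{different} edge-sharing clusters; it does not cover the case just described, in which the two wheels meeting only at a vertex already lie in one connected edge-sharing cluster. To repair the argument you would have to show directly that the graph obtained by splitting the merged vertex still needs four colours --- for instance because it still contains a 4-chromatic multiwheel-type subgraph --- which is in substance what the paper's own (equally terse) remark preceding the lemma asserts without proof. The remaining ingredients you use (a 4-critical graph has no cut vertex, Lemma~\ref{vsth}, 4-chromaticity of an intact odd wheel, assigning all $W$-edges at $v$ to one side of the split) are fine.
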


Now we may go over to the main theorem of this chapter.

\begin{thm}\label{thm1} Let $2k+1$  ($k>0$) arbitrary odd wheels be summed in a way that edges of wheels are summed modulo two and all wheels have one overlapping vertex. The resulting graph $M$ is 4-critical if and only if each wheel by summation modulo two looses just two of its edges and resulting graph is planar.
\end{thm}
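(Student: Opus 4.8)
The theorem has two directions. The easier direction is necessity: if $M$ is 4-critical then each wheel must lose exactly two edges and $M$ must be planar. The statement that $M$ is planar when 4-critical is partly a hypothesis we must respect; the real content is showing that losing a number of edges different from two forces $M$ to be non-4-critical. I would split this into cases according to how many edges two adjacent wheels share in the summation. If some wheel loses zero edges, then (after discarding, by Lemma~\ref{ov} and Lemma~\ref{vvsth}, the purely-vertex intersections) that wheel is a connected component or attached only at vertices, and the argument of Lemma~\ref{vvsth} (vertex split, Lemma~\ref{vsth}) shows $M$ cannot be 4-critical. If some wheel loses exactly one edge, or more generally if the wheels form, combinatorially, an unclosed sequence in which consecutive wheels overlap in an edge, then Lemma~\ref{us} applies and $M$ is 3-chromatic, hence not 4-critical. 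If some wheel loses three or more edges, I would argue that the overlaps create a separating pair or triple of vertices across which a 3-coloring extends — essentially the $W_5+W_5$-glued-on-$K_5$ phenomenon mentioned before Lemma~\ref{ov}, where too much overlap destroys criticality because an edge deletion in one wheel no longer propagates. The cleanest way to package all of this: show that "each wheel loses exactly two edges, glued so that all share one vertex" is precisely the configuration forming a *closed* cyclic sequence of wheels (a genuine multiwheel), and that any deviation either opens the cycle (Lemma~\ref{us}) or over-identifies (Lemmas~\ref{ov}, \ref{vvsth}).

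For sufficiency, assume each of the $2k+1$ odd wheels loses exactly two edges in the sum, all sharing the common vertex, and $M$ is planar. I would first pin down the structure: losing exactly two edges per wheel means each wheel $W_{k_i}$ shares two edges with its neighbours (one edge with the "previous" wheel, one with the "next"), and the shared edges all meet at the global hub, so $M$ is a "multiwheel" — a central hub, surrounded by a cyclic chain of $2k+1$ odd-wheel sections, where each section contributes its rim path and the section boundaries are the shared spike edges. This is exactly the generalization of the $W_k = k\cdot C_3$ decomposition in the paragraph opening the section, with $C_3$ replaced by arbitrary odd wheels. Then: (1) prove $\chi(M)=4$ — a 3-coloring would have to 3-color each odd wheel with its two boundary spikes getting forced colors, and going around the cycle of odd length produces a parity obstruction, much as for an odd cycle of constraints; (2) prove criticality — for each vertex $v$, exhibit a proper 4-coloring in which $v$ is the unique vertex of its color, i.e. a 4-coloring that becomes a 3-coloring upon contracting/deleting at $v$. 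Here I would lean on the four coloring types of the base graph $w_{1_3}$ in fig.~\ref{fig7} (lonely color on central hub, on a section hub, on a rim vertex, on two rim vertices) and show each extends to $M$: since each section is itself 4-critical (odd wheel), each section vertex can be the lonely vertex *within its section* while the remaining sections, now an unclosed sequence, 3-color compatibly by Lemma~\ref{us}.

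The main obstacle I expect is step (2) of sufficiency — verifying criticality uniformly across all vertex types and all section sizes, in particular handling the interaction between a chosen "lonely" vertex deep inside one odd wheel and the coloring constraints transmitted around the whole cyclic multiwheel. The parity/counting argument for $\chi(M)=4$ and the reductions in the necessity direction are comparatively mechanical given Lemmas~\ref{ov}--\ref{vvsth}; but showing that deleting any single edge (equivalently, that every vertex can carry a lonely color) yields a 3-colorable graph requires carefully using the fact that *opening the cyclic sequence at any section* drops the chromatic number to 3 (Lemma~\ref{us}), and then checking that the lonely-color patterns of fig.~\ref{fig7} are the only obstructions and that they all in fact occur. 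I would also need the planarity hypothesis precisely at the point of ruling out the "too much overlap" configurations in necessity and of identifying $M$ as a bona fide plane multiwheel; without it one can glue odd wheels losing two edges each in a non-planar way (sharing the two edges with non-consecutive wheels) and break criticality, which is why the hypothesis appears in the statement.
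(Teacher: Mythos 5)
Your skeleton matches the paper's proof in outline (reduce necessity to closed cyclic configurations via Lemmas \ref{ov}, \ref{us}, \ref{vvsth}; identify the two-edge-loss configuration with a cyclic chain of sections around a common hub; prove criticality via lonely colorings in the spirit of fig.~\ref{fig7}), but the two steps that carry the actual content are left as intentions rather than arguments, and in one of them the paper's key idea is missing from your plan. For sufficiency, you propose a global parity argument for $\chi(M)=4$ plus an ``open the cycle at any section'' argument (via Lemma \ref{us}) for criticality, and you yourself flag that verifying the lonely-color patterns uniformly over all vertex types and section sizes is the main obstacle; the paper instead carries this out locally, by classifying the possible sections into four types (rr, ss, sr, rs, fig.~\ref{fig2}) and checking for each type that the section forces a fourth color while deletion or contraction of any of its edges releases that forcing. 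Your route is plausible and arguably cleaner in spirit, but as written it is a plan, not a proof: the step ``checking that the lonely-color patterns of fig.~\ref{fig7} are the only obstructions and that they all occur'' is exactly the case analysis that has to be done, and it is not done.

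In the necessity direction there are two concrete gaps. First, your treatment of a wheel losing three or more edges rests on an unsubstantiated claim that ``too much overlap destroys criticality'' via a separating pair or triple; note that a three-edge (triangle) overlap can perfectly well produce a 4-critical graph --- the paper's own example $w_{1_3}=O+W_3$ annihilates a whole triangle --- so any such argument must use the specific hypothesis of $2k+1\ge 3$ wheels sharing one hub vertex, and the relevant structural fact the paper starts from (two wheels overlap in one or three edges, never two, which is what forces the chain structure under two-edge loss) does not appear in your plan. Second, and more importantly, you assert that a non-planar gluing (a wheel whose two annihilated spikes are attached to non-consecutive neighbours) ``breaks criticality'' without saying why; the paper's argument here is a parity one: non-consecutive spikes split that odd wheel into an even subwheel and an odd subwheel, and routing the cyclic chain through the even part destroys the odd-number-of-odd-sections structure, so the resulting graph is in fact 3-chromatic (fig.~\ref{fig12}). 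Without this (or an equivalent) argument, the ``only if'' direction --- precisely the part where the planarity hypothesis earns its place in the statement --- is not established.
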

\begin{proof}
Let us first observe that two wheels may overlap in one or three edges but not in two, thus, to get two annihilating edges a wheel should overlap with two other wheels with one overlapping edge in each.

Let us first assume that wheel graphs are summed observing two edge loss condition. In this case four configurations of new section are possible, see fig. \ref{fig2}. We denote graphs achieved in this way by $w_{k_1\hy k_2\hy ...k_q}$ where $q$ is number of sections, where in each section there are $2k_i+1$ edges, and call them {\em multiwheels}. In this type of denotation we as if ignore three/four ways of section's configuration, but one could easily elaborate denotation with taking these different types of sections into account, see below.

\placedrawing[h]{OCT2.LP}{Four types of sections for planar multiwheels possible. If third type we take in two oriented ways, leftwards and rightwards, then we get one type of section more. When so numbered first two  and two other  sections are mutually dual. It is convenient to characterize type of section by type (rim or spike edge) of lost edges in wheel. Then sections are 1) rim-rim-section (rr-section), 2) spike-spike-section (ss-section), 3) spike-rim-section (sr-section) and 4) rim-spike-section (rs-section). }{fig2}

In figure \ref{fig6}  we see simplest case where summed are three wheels $W_3$ giving multiwheel $w_{1\hy 1\hy 1}$. In place of multiindex $k_1\hy k_2\hy ...k_q$ with hyphens we equally use denotation without hyphens in case no confusion might arise, e.g. $w_{111}$ in place of  $w_{1\hy 1\hy 1}$.

It is easy to see that $w_{111}$ is 4-critical. Indeed, reserving central hub vertex as eventual lonely color vertex (receiving color $D$, see fig. \ref{fig7}) other vertices get forced colors. Further, both hub edge, spike edge and rim edge contractions/eliminations lead to 3-chromatic graph.

Let us assume that the resulting graph is planar. In that case it is convenient to characterize type of section by type (rim or spike edge) of lost edges in wheel. Then sections are 1) rim-rim-section (rr-section), 2) spike-spike-section (ss-section), 3) spike-rim-section (sr-section) and 4) rim-spike-section (rs-section), see fig.\ref{fig2}. Section arisen from $w_1$ we call {\em simple section}, which is of arbitrary type due to automorphisms, i.e., rim edges are spike edges too, and reversely.

Now let us consider first type of section, rr-section, fig.\ref{fig2}. At least one vertex on rim may receive third color and then corresponding section hub receives forth color. Removing rim edge makes possible to color rim with two colors, but removing section's spike edge allows now both previous adjacent vertices color with one color, thus avoiding fourth color.

Let us consider second type section. Now the same applies for the local rim edges of the section. At least one vertex of the local rim should receive third color, and corresponding local rim edge or spike edge elimination may avoid use of fourth color.

Let us consider third (and fourth) type of section. Now outer hinges and vertex adjacent to central hub should receive different colors, but removal at least one edge from section violates this condition and allows to color hinge vertices of inner rim with the same color.

Thus, we have proved that multiwheel is 4-critical.

Let us prove theorem in the other direction.

According lemmas \ref{ov},\ref{us},\ref{vvsth} we are to consider only those sums of wheels where intersections of vertices without incident edges are absent and unclosed sequences of wheels are absent. Even more, if some closed sequences are present, but some wheels as unclosed ends are present, these cases are not to be considered because can't give 4-critical graphs. The only cases are these where only closed sequences of wheels are present. Further, only one closed sequence as cycle is to be considered for further.

Further, let us consider case of non-planar resulting graph, see fig. \ref{fig12}. In that case we have engaged in cycle of wheels some with two spikes that are not sequencing one to other. In these cases such wheel may be as if in two ways taken in cycle along one or other orientation in this wheel between two spikes. It is easy to see that this leads to fact that such wheel cant't give resulting graph as 4-critical, because odd wheel may be divided only in half of odd wheel and half of even wheel. Taking into cyclic path even part of wheel would spoil odd-times-odd structure necessary for 4-critical multiwheel. (See fig.\ref{fig12}, where rights minimal case of non-planar resulting graph is shown to be 3-chromatic.) Thus, we can't afford non-planar spike pairs in edge summation modulo two. We have come to conclusion that closed sequence should be planar.

\placedrawing[h]{OCT12.LP}{Illustration to proof. Minimal possible case of nonplanar section for as if eventual nonplanar multiwheel $w_{115}$. Left, we see non-planar section, where annihilated spikes (dotted) are not sequencing ones, and section falls into as if two subwheels, one even subwheel with subrim $1\hy 2\hy 3$ and one odd subwheel with subrim $3\hy 4\hy 5\hy 1$. Right, we color this nonplanar multiwheel into three colors, i.e., it isn't 4-critical, even not 4-chromatic.}{fig12}

Thus, under assumption that all summing wheels have in common one vertex each in each wheel with two incident edges that annihilate by summation modulo two we have proved what was necessary. We have come to 4-critical graph only by specified conditions.

We have proved the theorem.
\end{proof}

\subsection{Denotations for plane multiwheels. Some characteristics}

Let us introduce some notational conventions for wheels and multiwheels. Along with traditional denotation for wheels with capital letter $W$ with index $k$, i.e. $W_k$,  for odd wheels of order $k=2q+1$ we use denotation $w_q$. For plane multiwheels we use letter $w$ with odd ($k\geq 3$) indices, $w_{q_1, ..., q_k}$ where $w_{q_i}$ was $i$-th wheel in summation. Let us introduce quantity $Q=\sum q_i$ and sometimes use denotation $w_Q$ for this multiwheel.

If odd wheel has order $k=2q+1$, it has $n=2q+2$ vertices and $m=2n-2=4q+2$ edges. Similar expressions hold for multiwheels, where in place of $q$ stands $Q$. Indeed, multiwheel has $2Q+1$ vertices and $4Q$ or $2n-2$ edges. To get analogue expressions both for wheels and multiwheels we had to use for wheels in place of quantity $q$ quantity $s=k/2$, giving fractional numbers for odd wheels. It would be interesting to ask then what pre-wheel stands behind $w_{1/2}$. It might be multiedge or edge adjacent to loop.

Both classes, wheels and multiwheels have $m=2n-2$ edges. Let us notice that to the class of graphs $\textsf{G}_{n,2n-2}$ belong these quadrangulating projective plane. In next section we show how this fact turns crucial for multiwheels in generalizing them for projective plane.

Another question would be how to designate indices in multiwheel $w_{q_1, ..., q_k}$ if we wanted to take into account type of sections standing behind corresponding indices. We have four types of sections, therefore we have to equip this index with this additional information. One way would be to use four colors for indices. Other way would be to supply index with diacritic sign, say, $w_{1\hy \hat{3}\hy \check{3}\hy \grave{3}\hy \acute{3}}$ or $w_{1\hat{3}\check{3}\grave{3}\acute{3}}$ for $w_{13333}$ with types of sections in augmented order.

\section{Gr\"{o}tzsch graph, Mycielski's Construction and 4-critical projective plane  multiwheels}

We start with observation that Gr\"{o}tzsch graph \cite{bo 08} may be considered as edge sum modulo two of five wheels $w_1$ and one wheel $w_2$, see fig. \ref{fig9}.

\placedrawing[h]{OCT9.LP}{Example of non-planar graph that is 4-critical: Gr\"{o}tzsch graph. It may be built as edge sum $5 W_3 +W_5$ modulo two. Annihilated edges are depicted as dotted for one section standing for $W_3$ and for $W_5$. }{fig9}

Gr\"{o}tzsch graph is 4-critical and it quadrangulates projective plane, see fig. \ref{fig8}. Indeed, it has $11$ vertices and $20$ edges, i.e., it belongs to $2n-2$-edges-class of graphs, and fig. \ref{fig8} shows how this embedding on projective plane is performed.

\placedrawing[h]{OCT8.LP}{Gr\"{o}tzsch graph on projective plane. It quadrangulates projective plane. Compare \cite{moh 06}.}{fig8}

If in place of Gr\"{o}tzsch graph formed as $5w_1+w_2$ we take only three plus one wheel, we get graph that is isomorphic to the base graph $w_{1_3}$. Taking this fact into account, we designate this graph $g_{111}$ or $g_{1_3}$ and traditional Gr\"{o}tzsch graph as $g_{11111}$ or $g_{1_5}$. First, let us notice that both graphs are 4-critical, both quadrangulate projective plane, first being planar, bet second - projective planar.  We might ask - are all graphs $q w_1+w_q$ belonging summed according multiwheel summation pattern 4-critical? The answer is quite obviously positive, and we express the fact in the lemma what follows. We say that sum $q w_1+w_q$, $k=2q+1$, $q>0$, modulo two is got according multiwheel pattern if $k$ wheels $w_1$ each looses two edges and $w_q$ looses $k$ edges. This class of graphs we call Gr\"{o}tzsch class.

\begin{lem} For $k=2q+1$, $q>0$, resulting graphs from $k w_1+w_q$ summed according multiwheel pattern are 4-critical and quadrangulate projective plane.
\end{lem}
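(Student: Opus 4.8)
The plan is to prove the two assertions separately — first that each graph in the Gr\"{o}tzsch class is $4$-critical, and second that it quadrangulates the projective plane — and in both cases to reduce the claim to the already-established Theorem~\ref{thm1} together with an explicit description of the embedding.

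\textbf{The $4$-critical part.} First I would observe that the sum $k w_1 + w_q$ taken according to the multiwheel pattern is a special case of the construction in Theorem~\ref{thm1}: we have $2k'+1 = k$ odd wheels (the $k$ copies of $w_1$) together with the additional wheel $w_q$, all sharing one common vertex, and by hypothesis each $w_1$ loses exactly two of its edges while $w_q$ loses $k$ edges — i.e. exactly two edges in each of its $k$ sections, since $w_q$ is odd of order $k$. So $w_q$ plays the role of the ``outer cycle'' of the multiwheel, and each copy of $w_1$ becomes a simple section. Thus the resulting graph is precisely a plane multiwheel $w_{1_k}$ in the sense of Theorem~\ref{thm1}, PROVIDED we can verify its planarity; once planarity is granted, Theorem~\ref{thm1} gives $4$-criticality immediately, because every wheel loses exactly two edges. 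So the real content of the first half is: (i) check the edge/vertex count is consistent with planarity (which the paper already does — $2k+1$ vertices, $4k$ edges, and these equal $2n-2$), and (ii) exhibit an actual plane embedding. For (ii) I would argue directly: the $k$ simple sections $w_1$ arrange cyclically around the common hub exactly as the triangular sections of $W_k$ do, and the spoke/rim edges of $w_q$ that survive form the rim-and-spoke skeleton tying these sections together; drawing $w_q$ as a standard plane wheel and attaching each $w_1$-section in the face structure gives a plane drawing. This is the step I expect to require the most care, since ``summed according to the multiwheel pattern'' must be pinned down enough to guarantee the summation does not force a non-planar configuration of the kind ruled out in Theorem~\ref{thm1}'s converse direction (fig.~\ref{fig12}).

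\textbf{The quadrangulation part.} Here I would mimic the two worked examples in the text: $g_{1_3}$ ($=w_{1_3}$) quadrangulates the projective plane (fig.~\ref{fig10}a), and $g_{1_5}$ (the Gr\"{o}tzsch graph) quadrangulates it (fig.~\ref{fig8}). The general principle is the counting criterion already invoked: a simple graph on $n$ vertices embeds as a quadrangulation of the projective plane iff it has exactly $2n-2$ edges \emph{and} admits an embedding in which every face is a $4$-cycle; Euler's formula on the projective plane ($n - m + f = 1$) forces $f = n-1$ faces, and if all faces are quadrilaterals then $4f = 2m$, i.e. $m = 2f = 2n-2$, consistent. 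Since we have already shown $m = 2n-2$, it remains to produce the embedding. I would describe it uniformly: take the crosscap in the center near the common hub, route the $q$ ``long'' spokes of $w_q$ through the crosscap so that the faces they bound pair up around it, and check that each of the resulting regions — both the central ones near the crosscap and the outer ones along the sections — is bounded by exactly four edges. For $k = 3$ and $k = 5$ this is verified by the figures; for general odd $k$ the face structure is the obvious periodic extension, so the verification is a single generic calculation of face lengths rather than $k$ separate checks.

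\textbf{Anticipated main obstacle.} The delicate point in both halves is the same: making the phrase ``summed according to the multiwheel pattern'' precise enough that the embedding is canonical. In the $4$-critical direction one must be sure the pattern does not admit the ``twisted'' spoke pairings that Theorem~\ref{thm1} shows destroy $4$-criticality; in the quadrangulation direction one must be sure the chosen projective-plane embedding genuinely has all quadrilateral faces for every odd $k$, not just $k \in \{3,5\}$. I would handle both by fixing, once and for all, the cyclic order in which the $k$ sections sit around the common hub (inherited from the cyclic order of $w_q$'s sections) and then arguing that with this fixed rotation system the plane embedding and the projective-plane quadrilateral embedding are forced; the routine face-length bookkeeping then goes through uniformly in $k$.
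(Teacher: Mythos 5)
There is a genuine gap in the first half of your argument. You reduce $4$-criticality to Theorem~\ref{thm1} by claiming that $k w_1 + w_q$ summed according to the multiwheel pattern ``is precisely a plane multiwheel $w_{1_k}$'', with planarity to be verified afterwards. That planarity can never be verified: for $q\geq 2$ these graphs are non-planar. The very first non-trivial member, $g_{1_5}$, is the Gr\"{o}tzsch graph, which is triangle-free and $4$-chromatic and hence not planar (and the paper itself stresses the distinction: the base graph is planar, the Gr\"{o}tzsch graph only projective planar); the same holds for every Mycielskian of an odd cycle of length at least five. Moreover, even setting planarity aside, the summation pattern here is not the one hypothesized in Theorem~\ref{thm1}: there every wheel must lose exactly two of its edges, whereas in the Gr\"{o}tzsch-class pattern the central wheel $w_q$ loses $k=2q+1$ edges (its whole rim). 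Your remark that this amounts to ``two edges per section'' does not repair this, since the theorem's condition is per wheel, not per section. Indeed $w_{1_5}$ in the sense of Theorem~\ref{thm1} and $g_{1_5}$ both have $11$ vertices and $20$ edges, but they are different graphs, one plane and one not, so the identification your reduction rests on is false. Consequently the $4$-criticality claim needs a different argument; the paper disposes of it by checking the two smallest cases ($q=1$ gives the base graph $w_{1_3}=g_{1_3}$, $q=2$ gives the Gr\"{o}tzsch graph) and appealing to the symmetry of the construction, the class being exactly the Mycielskians of odd cycles, for which $4$-criticality is the classical property of Mycielski's construction applied to a $3$-critical graph.

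Your second half, the quadrangulation of the projective plane via a fixed rotation system around the crosscap and uniform face-length bookkeeping, is in the right spirit and is in fact more explicit than the paper's treatment (which essentially points to fig.~\ref{fig10} and fig.~\ref{fig8} and the $2n-2$ edge count). But as written it cannot stand on its own either, because it is presented as the companion to a plane embedding that does not exist; you would need to carry out the projective embedding argument directly for all odd $k$, independently of Theorem~\ref{thm1}.
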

Of course, for $k=1,2$, we get the base graph and Gr\"{o}tzsch graph, which are 4-critical, and further graphs are 4-critical due to symmetry.

This class $k w_1+w_q$, extending the base graph and the Gr\"{o}tzsch graph may be got as the Mycielski's Construction \cite{bo 76}, page 130. For that we are to take 3-critical graph, i.e., arbitrary odd cycle $C_k$, and apply Mycielski's Construction. Thus we see that if in Mycielski's Construction we replace each new got k-critical graph  with arbitrary k-critical graph then we should receive $k+1$-critical graph, which fact follows from the proof of the Mycielski's Construction's applicability to get k-critical graphs, see \cite{bo 08}. Natural question would arise does there exist Mycielski's Construction's generalization that works backwards too, i.e., that each $k+1$-critical graph has as antecedent k-critical graphs in terms of this or similar construction. In order to include in Mycielski's Construction previous planar class we are to allow to match previous graphs $m$ edges with $m$ new vertices plus extra vertex. This would work for step from 3-critical to 4-critical graphs, and give just our plane class of multiwheels.

Further we are going to build more multiwheels, but the previous class should be the only that were quadrangulating projective plane.

Further we generalize projective planar multiwheels similarly as in case plane multiwheels, i.e., sections of $w_1$ may be replaced with arbitrary odd wheels. Fig. \ref{fig10} shows simplest properly projective plane multiwheel $q_{112}$.

\begin{str}\label{str1}
Let us take odd in number ($k=2q+1$) odd wheels and one wheel $w_q$. Let us take in each of first wheels two proximal spikes and rim edge so that they do not form triangle, and middle spike edge match with central wheel $w_q$, and other two chosen edges (spike and rim edge) match in cyclical sequence of wheels.
\end{str}

The resulting graph built according construction \ref{str1} belongs to $2n-2$ edges class and is 4-critical. We call the resulting graph {\em multiwheel} similarly to those planar ones.

\placedrawing[h]{OCT10.LP}{a) Graphs $w_{111}$ and $g_{111}$ are isomorphic; b)simplest properly projective plane multiwheel with minimal edge number $q_{112}$.}{fig10}

\begin{thm}\label{thm7}
Multiwheels built according construction \ref{str1} are 4-critical.
\end{thm}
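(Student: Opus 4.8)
The plan is to run the forward half of the proof of Theorem~\ref{thm1} again in the projective setting, the new ingredients being (i) the common central vertex of Theorem~\ref{thm1} is now blown up into a full odd wheel $w_q$, exactly as in the step from $g_{1_3}$ to the Gr\"{o}tzsch graph $g_{1_5}$, and (ii) each outer wheel $w_{q_i}$ is glued to its two cyclic neighbours along a \emph{non-sequencing} pair of annihilated spikes together with one rim edge --- precisely the ``non-planar section'' that was forbidden for plane multiwheels in Theorem~\ref{thm1}, and which is now the whole reason the resulting graph is only projective--planar. So I would first fix the structure: $M$ consists of the central wheel $w_q=W_{2q+1}$, whose $k=2q+1$ spikes annihilate one against each outer wheel at the common hub $h$, together with the $k$ outer odd wheels $w_{q_1},\dots,w_{q_k}$, consecutive ones overlapping in exactly one edge (the ``other spike'' of one matched against the chosen rim edge of the next), so that the outer wheels form a single closed sequence. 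The main line of attack is then a \emph{replacement argument}: start from the Gr\"{o}tzsch-class graph $k\,w_1+w_q$, which is $4$-critical by the lemma on the Gr\"{o}tzsch class, and replace its sections $w_1$ one at a time by arbitrary odd wheels $w_{q_i}$ glued according to Construction~\ref{str1}, showing at each step that $4$-criticality is preserved.

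Second, for $\chi(M)=4$ I would reserve the common hub $h$ for a lonely fourth colour and show that on $V(M)\setminus\{h\}$ every proper $3$-colouring is forced (up to the symmetries of the construction) and necessarily uses all three colours on the neighbourhood of $h$, so $h$ cannot be $3$-coloured; this is the argument that makes $g_{1_5}$ $4$-chromatic (it is the Mycielski argument applied to the odd rim cycle of $w_q$), and the point to be checked is that, along each outer wheel, the remnant of $w_{q_i}$ --- which, having lost two non-sequencing spikes, decomposes at $h$ into an odd ``active'' fan and an even ``inert'' fan meeting in two hinge vertices, cf.\ fig.~\ref{fig12} --- transmits between its two hinge vertices exactly the colour constraint that the triangle section transmits in the Gr\"{o}tzsch graph. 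Granting that, a $3$-colouring of $M$ would restrict to one honouring all the section constraints and hence give a $3$-colouring of $k\,w_1+w_q$, a contradiction.

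Third, for criticality I would show $M-e$ is $3$-chromatic for every edge $e$, by a case analysis on the location of $e$: (a) an edge of the rim of the central wheel $w_q$; (b) a remaining spike of an outer wheel (i.e., one incident to $h$); (c) an edge shared between two cyclic-neighbour outer wheels; (d) a rim edge internal to a single outer wheel. In cases (c) and (d) the deletion is the same local move as in Theorem~\ref{thm1}'s analysis of the sr/rs-sections --- one further edge removed from the odd-wheel remnant lets its two hinge vertices take a common colour, which collapses the forcing of step two --- and in cases (a) and (b) the deletion breaks the odd cycle carried by $w_q$, and the argument is the one showing the Gr\"{o}tzsch graph edge-critical, reusable verbatim since the inert fans contribute no constraint of their own beyond what the active fans already impose. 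Deleting any edge therefore yields a $3$-colouring, and together with $\chi(M)=4$ this is $4$-criticality. (If the intended notion of criticality includes contraction, the same cases work: contracting an edge either shortens the central odd cycle to an even one or merges two adjacent hinge vertices, and in both situations three colours suffice.)

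The hard part will be the lower bound $\chi(M)\ge 4$, and inside it the precise statement that removing a non-sequencing spike pair and one rim edge from $w_{q_i}$ leaves its two hinge vertices constrained exactly as the endpoints of a replaced section are: one must identify which vertices of $w_{q_i}$ play the role of hinges, verify that the ``active'' arc between them has odd length so that, with the hub spikes over it, it behaves like an odd wheel section and enforces the section constraint, and check that the ``inert'' even arc imposes nothing \emph{stronger} yet is itself edge-minimal for that constraint (otherwise criticality would fail at the inert edges). Turning this ``active part is odd, inert part is tight'' observation --- sketched only informally around fig.~\ref{fig12} --- into a clean lemma valid for arbitrary $q_i$ and arbitrary $q$ is the technical core; everything else is Theorem~\ref{thm1}'s bookkeeping transported to the cyclic-plus-central configuration.
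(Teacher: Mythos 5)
You are following the same route the paper itself takes --- its entire proof of this theorem is the two-line remark that the base graph lies in the Gr\"{o}tzsch class and that the construction for the base graph extended with a non-planar section (fig.~\ref{fig12}) ``may be used for Gr\"{o}tzsch class in the whole'' --- namely: take the Gr\"{o}tzsch-class graph $k\,w_1+w_q$ as the $4$-critical skeleton (the paper's lemma on that class) and argue that replacing its simple sections by arbitrary odd wheels, glued as in Construction~\ref{str1}, preserves $4$-criticality by re-running the section-by-section colouring analysis of Theorem~\ref{thm1}. In that sense your plan is the paper's proof worked out in considerably more detail, and the ``hinge-transmission'' lemma you single out as the technical core is exactly the step the paper never writes down either.

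Two concrete problems remain. First, your structural description of Construction~\ref{str1} does not match the construction as calibrated on the paper's own example, the Gr\"{o}tzsch graph $5W_3+W_5$ (fig.~\ref{fig9}): there is no vertex common to all the wheels; the central wheel $w_q$ loses its \emph{rim} edges, not its spikes, so its hub survives as the Mycielski apex adjacent to the $k$ outer hubs (which are the central rim vertices); and each outer wheel loses two \emph{proximal} (i.e.\ sequencing) spikes plus one rim edge, the three forming a path whose middle spike is matched against a central rim edge, only the remaining spike and the rim edge being matched to the two cyclic neighbours. As written, your colouring step is pinned to the wrong graph: if the central spikes annihilated ``at the common hub $h$'', the apex would lose precisely the adjacencies your Mycielski-type forcing argument needs, and the ``non-sequencing spike pair'' you import from fig.~\ref{fig12} is not what the construction prescribes for the outer wheels. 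Second, even after the incidences are corrected, the deferred core lemma --- that the remnant of an arbitrary odd-wheel section transmits between its interface vertices exactly the constraint a simple section does, and is edge-minimal for it --- is announced but not proved, so the proposal, like the paper's own proof, remains a programme rather than a complete argument; the replacement strategy itself is sound and would go through once these two items are supplied.
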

\begin{proof}\label{pr7}
Let us use the fact that the base graph belongs to Gr\"{o}tzsch class, and the construction for the base graph extended with non-planar section (see fig.\ref{fig12}) may be used for Gr\"{o}tzsch class in the whole.
\end{proof}

Let us end this section with one more theorem.

\begin{thm}\label{thm8}
Multiwheel quadrangulates projective plane only if it belongs to Gr\"{o}tzsch class.
\end{thm}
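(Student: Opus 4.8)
The reverse implication --- that every graph in Gr\"{o}tzsch class quadrangulates the projective plane --- is already the lemma following Figure~\ref{fig8}, so what has to be proved is: if a multiwheel $M$ quadrangulates the projective plane, then each of its sections is simple, hence $M=k w_1+w_q$ and $M$ belongs to Gr\"{o}tzsch class. The plan is to first clear away the obvious counting, then to work locally at the hub of a hypothetical non-simple section, and finally to promote the resulting local rigidity to a global contradiction. For the counting: a quadrangulation of the projective plane on $n$ vertices has $f=m/2$ faces and $n-m+m/2=1$ by Euler's formula, so $m=2n-2$; but every multiwheel is already in $\textsf{G}_{n,2n-2}$, so this condition holds automatically and the whole argument must rest on the facial structure.

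Assume then that some section of $M$ comes from an odd wheel $w_{q_i}$ with $q_i\ge 2$ (so the section is not simple), with hub $s$ and rim $v_0,\dots,v_{2q_i}$. Edge summation modulo two destroys only two edges of this wheel, so $s$ keeps at least $2q_i-1\ge 3$ spokes. The key observation is that two rim vertices of this section have, in $M$, no common neighbour other than $s$, the only exception being the at most two pairs lying on an interface edge shared with a neighbouring section (which acquire the hub of that neighbour as a second common neighbour). Using the elementary description of a quadrilateral face --- two rotation-consecutive neighbours $a,b$ of a vertex $v$ bound a $4$-face exactly when $a$ and $b$ have a common neighbour distinct from $v$ --- it follows that in the rotation at $s$ two spoke-ends can be consecutive only if they are one of these interface pairs or a pair at rim-distance two, since a rim-consecutive pair would force the triangular face $s v_j v_{j+1}$. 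Because $2q_i+1$ is odd, the unique cyclic ordering of the $\ge 3$ spoke-ends all of whose consecutive pairs are at rim-distance two is the ``skip-by-two'' order $v_0,v_2,v_4,\dots$; so, up to at most two interface corrections, the section is forced to be drawn with the skip-by-two rotation at its hub --- that is, as the essentially unique projective-planar quadrangulation of the wheel $w_{q_i}$, whose faces are the quadrilaterals $s,v_{j-1},v_j,v_{j+1}$.

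What remains --- and this is where the real work lies --- is to derive a contradiction from this forced embedding. The cleanest route seems to be an interface argument: the quadrilateral face $s,v_{j-1},v_j,v_{j+1}$ uses the rim edges $v_{j-1}v_j$ and $v_jv_{j+1}$, and one or two edges of this section are \emph{annihilated} by the summation, hence are not edges of $M$ at all; confronting these two facts, and carrying the same rotation analysis to the two hinge vertices through which the section is attached to its neighbours in the cyclic chain, one should find that the section cannot be closed up around the central hub $H$ --- no placement of the two lost edges (rim--rim, spike--spike, or spike--rim, in the terminology of Figure~\ref{fig2}) is compatible with all faces at $s$ and at the hinges being quadrilaterals. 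A topological cross-check points the same way: the skip-by-two embedding is the projective-planar, not the planar, embedding of $w_{q_i}$, so it already absorbs the single crosscap of the projective plane inside that section, whereas $M$, being $4$-critical hence non-bipartite --- and a quadrangulation is bipartite unless it carries a non-contractible odd cycle --- needs a crosscap for the rest of its structure as well. The main obstacle is making the interface argument watertight: handling every position of the two annihilated edges and controlling the rotations at the hinge vertices and at $H$, where the global embedding and not merely the local star of $s$ is involved. Once that is done, $M$ has no non-simple section and therefore belongs to Gr\"{o}tzsch class.
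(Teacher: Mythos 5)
There is a genuine gap, and it sits exactly where the paper's own proof does its (only) work. Your reduction ``$M$ quadrangulates the projective plane $\Rightarrow$ every section is simple $\Rightarrow$ $M=k w_1+w_q$, i.e.\ $M$ is in Gr\"{o}tzsch class'' is invalid at the second arrow: having all sections simple does \emph{not} place a multiwheel in the Gr\"{o}tzsch class. The plane multiwheels $w_{1_k}$ with $k\geq 5$ (e.g.\ $w_{1_5}$, five copies of $W_3$ attached by the planar rr/ss/sr/rs section types of fig.~\ref{fig2}) consist entirely of simple sections, lie in the $2n-2$-edge class, and yet are not Gr\"{o}tzsch-class graphs --- the Gr\"{o}tzsch-class graph with the same parameters, $g_{1_5}=5w_1+w_2$, is the (non-planar) Gr\"{o}tzsch graph, a different graph. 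Your whole local argument is launched at the hub of a section with $q_i\geq 2$, so it says nothing about these graphs, and they are precisely the subclass the paper singles out as the one needing proof: the paper argues via the necessary condition that in a quadrangulation every edge must lie in at least two quadrilateral cycles, and checks that in $w_{1_5}$ the edge joining a vertex of degree three to a vertex of degree four violates it. Without some such argument excluding $w_{1_k}$, $k>3$, your proof cannot conclude.

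Second, even the case you do address (a non-simple section) is not actually closed. After the rotation analysis at the section hub $s$ you explicitly defer the contradiction (``one should find that the section cannot be closed up \dots'', ``once that is done''), so the interface/hinge argument that you yourself identify as the real work is missing rather than carried out; as it stands this part is a plausible programme, not a proof. A smaller slip inside it: a rim-consecutive pair $v_j,v_{j+1}$ being consecutive in the rotation at $s$ does not ``force the triangular face $s v_j v_{j+1}$'' --- projective-plane quadrangulations can contain non-facial triangles ($K_4$ quadrangulates the projective plane) --- although the exclusion you want does follow from your common-neighbour criterion, since $v_j$ and $v_{j+1}$ have no common neighbour other than $s$ away from the interfaces. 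In summary: the non-simple-section case is sketched but unfinished, and the all-simple-sections planar case, which the paper treats as the heart of the theorem, is absent.
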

\begin{proof}\label{pr8}
The only subclass to be considered is plane multiwheels with simple sections, excluding the base graph, i.e., $w_{1_q}$, $q>1$. It suffices to consider the minimal graph from the class $w_{1_2}$. For graph to quadrangulate surface it is necessary that every edge goes into at least two square cycles. But the edge of $w_{1_5}$ that is incident to vertices of degree three and four doesn't fulfil this condition.
\end{proof}

\section{Octahedral theorem}
Let us formulate what we call octahedral theorem for the plane multiwheels.

As was told in introduction, minor bracket works for the base graph, i.e., $<O^-,O; w_{1_3}>$ is true: $O^- \prec w_{1_3}$, $O \not \prec w_{1_3}$ and $O^- \prec O$. It easily follows from facts that the base graph is only vertex 3-connected, i.e., it has triples of separating vertices, as long as octahedron graph doesn't have. This argument directly applies to plane multiwheels in general, because they are built allowing triples of separating vertices for each section, that excludes possibility for $O$ to be minor. Both plane and projective plane multiwheels have the base graph as their minor. Besides, Gr\"{o}tzsch graph doesn't have $O$ as minor. Indeed, it has 5 cubic vertices, which may be separated with triple of vertices, and adjacent to central hub vertex, and remaining 5 vertices aren't sufficient to hold $O$ as minor. This argument easily generalizes to Gr\"{o}tzsch class in the whole. It only remains to persuade oneself that it works for projective plane multiwheel in general. And again, sections that are differing from simple ones can be separated by triples of vertices, see fig.\ref{fig10}, b. Thus, we have done with the proof.

Let us formulate the fact for arbitrary multiwheels as theorem.

\begin{thm}Minor bracket $<O^-,O>$ works for both plane and projective plane multiwheel graph classes.
\end{thm}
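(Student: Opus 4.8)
The plan is to establish the minor bracket $\langle O^-, O; M\rangle$ for an arbitrary multiwheel $M$ by verifying the three defining conditions of a minor bracket separately: $O^- \prec O$, $O^- \prec M$, and $O \not\prec M$. The first is trivial since $O^-$ is $O$ with an edge deleted. For the second, I would use the observation already recorded in the introduction: contracting a single edge incident to a degree-3 vertex of the base graph $w_{1_3}$ yields $O^-$, so $O^- \prec w_{1_3}$; then, since every plane or projective-plane multiwheel contains $w_{1_3}$ as a minor (this is stated in the excerpt, following from Theorem~\ref{thm1} and Construction~\ref{str1}, as each multiwheel is built around the same cyclic core of at least three sections), minor-transitivity gives $O^- \prec M$.

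The real content is the third condition, $O \not\prec M$, and here I would argue via connectivity. The octahedron $O = K_{2,2,2}$ is $4$-connected, and more to the point it has no separating set of three vertices; any graph having $O$ as a minor must, after the relevant contractions, still be $3$-connected in the part realizing $O$, so $O$ cannot be a minor of a graph every ``large enough'' piece of which is cut off by a triple of vertices. The multiwheels are constructed precisely so that each section beyond a simple section is attached to the rest of the graph through only three vertices (the central hub plus the two hinge vertices shared with neighbouring sections) — this is exactly the structural remark made in the paragraph preceding the theorem, and it is visible in fig.~\ref{fig10}b for the projective case. So I would prove: if $G$ is vertex-$3$-connected but not $4$-connected, with a separating triple $\{x,y,z\}$ such that each side has at most $|V(O)|-1 = 5$ vertices outside $\{x,y,z\}$... — actually the cleaner route is to bound things by a counting/size argument combined with the cut structure: split $M$ along a separating triple into parts each too small, or too weakly attached, to host the $6$-vertex $4$-connected graph $O$, and observe that any model of $O$ in $M$ would have to spread a $4$-connected pattern across a $3$-cut, a contradiction. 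This is essentially the argument the excerpt already sketches for $w_{1_3}$, for the Grötzsch graph, and for the general Grötzsch class, so the proof is a matter of checking it uniformly covers every section type in fig.~\ref{fig2} and the non-planar section of fig.~\ref{fig12}.

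The main obstacle will be making the ``too small / too weakly attached to host $O$'' step genuinely uniform across all four planar section types and the projective (non-planar) section type, rather than case-checking each. The cleanest formalization is probably: in any multiwheel $M$, for any section $S$ (other than a simple section, and simple sections are handled by the base-graph case directly), the vertex set $V(S)$ meets the rest of $M$ in at most three vertices; hence $M$ has a $3$-separator, so $M$ is not $4$-connected; but every graph with an $O$-minor is ``$O$-linked'' in a sense forcing $4$-connectivity of the minor model, and one then has to rule out that the $O$-model lives entirely on one side of the $3$-cut — which is where the size bound ($\leq 5$ vertices per relevant side in the small cases, and an inductive peeling in the large cases) enters. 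Once that lemma is isolated, applying it to plane multiwheels, to the Grötzsch class, and to projective-plane multiwheels (Construction~\ref{str1}) is routine, and the theorem follows by assembling the three bracket conditions. I would finish by remarking, as the excerpt does, that for the base graph this recovers the ``octahedral theorem'' mentioned in \cite{ze 08}, now with proof.
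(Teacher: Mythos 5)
Your proposal follows essentially the same route as the paper: $O^-\prec w_{1_3}\prec M$ gives the positive half, and the negative half $O\not\prec M$ rests on the fact that every non-simple section of a multiwheel is attached through a separating triple while the octahedron is $4$-connected, combined with a size/counting argument on each side of the $3$-cut (exactly the argument the paper sketches for $w_{1_3}$, the Gr\"{o}tzsch graph, the Gr\"{o}tzsch class, and the projective case). Your remarks about isolating a uniform "model of a $4$-connected graph cannot cross a $3$-separator" lemma only make explicit what the paper leaves informal, so the approach matches and no genuine gap is introduced.
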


\end{document}